\newtheorem{thm}{Theorem}
\newtheorem{prop}[thm]{Proposition}
\newtheorem{cor}[thm]{Corollary}
\begin{document}

\title{A Direct Evaluation of the Periods of the Weierstrass Zeta Function}

\author{Shaul Zemel\thanks{The initial stage of this research has been carried
out as part of my Ph.D. thesis work at the Hebrew University of Jerusalem,
Israel. The final stage of this work was supported by the Minerva Fellowship
(Max-Planck-Gesellschaft).}}

\maketitle

\section*{Introduction}

The Weierstrass Zeta function \[Z_{L}(z)=\frac{1}{z}+\sum_{0\neq\lambda \in
L}\bigg(\frac{1}{z-\lambda}+\frac{1}{\lambda}+\frac{z}{\lambda^{2}}\bigg)\] of a
rank 2 lattice $L\subseteq\mathbb{C}$ is very important for the theory of
elliptic functions. Though not elliptic itself, (minus) its derivative
\[\wp_{L}(z)=\frac{1}{z^{2}}+\sum_{0\neq\lambda \in
L}\bigg(\frac{1}{(z-\lambda)^{2}}-\frac{1}{\lambda^{2}}\bigg)\] is elliptic, and
together with $\wp_{L}'$ it generates the field of functions which are elliptic
with respect to $L$. The Zeta function $Z_{L}$ also provides the relation
between the Weierstrass $\sigma$ function, a theta function of the lattice $L$,
and the elliptic function $\wp_{L}$, since $Z_{L}$ is the logarithmic derivative
of the $\sigma$ function and $Z_{L}'=-\wp_{L}$. Moreover, just like $\wp_{L}$ is
used in order to construct Eisenstein series of weight 2 (and from its
derivatives one obtains Eisenstein series of higher weights), the function
$Z_{L}$ is used in order to construct Eisenstein series of weight 1 (see Chapter
4 of \cite{[DS]}). In fact, the results of the present paper have been obtained
during the author's study of this construction of weight 1 Eisenstein series,
appearing in \cite{[DS]}.

Since $\wp_{L}$ is elliptic, the Zeta function $Z_{L}$ has a difference function
\[\eta_{L}:L\to\mathbb{C},\quad \eta_{L}(\lambda)=Z_{L}(z+\lambda)-Z_{L}(z),\]
where the latter expression is a constant independent of $z$. The map $\eta_{L}$
is a homomorphism, and can be considered as the period map of $\wp$ with respect
to $L=H_{1}(\mathbb{C}/L,\mathbb{Z})$. These constants $\eta_{L}(\lambda)$ have
various applications in the theory of elliptic functions: In particular, one of
these constants shows up in the multiplier of the Fourier expansion of the sigma
function. In all the references known to the author (see, for example, Chapter
18 of \cite{[L]}), the evaluation of $\eta_{L}$ requires various indirect tools
(contour integration, for example). The purpose of this work is to introduce a
specific order of summation for the series defining the zeta function which
enables us to obtain the values of $\eta_{L}$ directly, without the use of any
additional tools. The same technique applies for obtaining the ellipticity of
$\wp_{L}$ and its derivatives, though here the standard proof is also short and
simple.

The values of $\eta_{L}$ for $L=L_{\tau}$ involve the value of the Eisenstein
series $G_{2}$ at $\tau\in\mathcal{H}$, defined by
\[G_{2}(\tau)=\sum_{c\in\mathbb{Z}}\sum_{d,(c,d)\neq(0,0)}\frac{1}{(c\tau+d)^{2}
}\] (inner summation on $d$, then on $c$). One knows that $G_{2}$ is
quasi-modular of weight 2, but the proof must address the problem that the
defining series for $G_{2}$ converges only conditionally. Here we show how the
full quasi-modular behavior of $G_{2}$ follows immediately from its relation
with $\eta_{L}$ and from the homogeneity property of the latter, saving even the
need to verify that the quasi-modular action is an action.

In Section \ref{wpandder} we illustrate the application of the method for the
simple case of $\wp$ and its derivatives. Section \ref{Zeta} considers the
Weierstrass Zeta function and shows how this method evaluates $\eta_{L}$
directly. Finally, in Section \ref{G2} we use these results to derive the
quasi-modularity of $G_{2}$ in a simple and transparent manner.

I wish to thank J. Shurman, with whom I had a long and enlightening
correspondence while I was studying modular forms from \cite{[DS]}. Thanks are
also due to J. Bruinier and E. Freitag, who read this paper and offered useful
advice and clarifications.

\section{A Simple Illustration: $\wp$ and its Derivatives}\label{wpandder}

Let $\mathbb{C}$ denote the field of complex numbers, and for $w\in\mathbb{C}$,
$\Im w$ denotes the imaginary part of $w$. We denote the upper half plane
$\{\tau\in\mathbb{C}|\Im\tau>0\}$ by $\mathcal{H}$, and a \emph{lattice} $L$ in
$\mathbb{C}$ is a rank 2 discrete subgroup of the additive group of
$\mathbb{C}$.

The analysis becomes much simpler when one of generators of the lattice
$L\subseteq\mathbb{C}$ is 1. For an element $\tau\in\mathcal{H}$, we denote the
lattice $\mathbb{Z}\tau\oplus\mathbb{Z}$ by $L_{\tau}$, and the index $L_{\tau}$
is classically replaced by $\tau$. It is evident that every lattice $L$ is
$\alpha L_{\tau}$ for some $\alpha\in\mathbb{C}$ and $\tau\in\mathcal{H}$ (but
not uniquely).

\medskip

We start by presenting our argument for the simplest case of the derivatives of
$\wp$. For $k\geq3$ we have that the $(k-2)$th derivative of $\wp_{L}$ is
\[\wp_{L}^{(k-2)}(z)=(-1)^{k}(k-1)!\sum_{\lambda \in
L}\frac{1}{(z-\lambda)^{k}},\] hence for $L=L_{\tau}$ we get
\[\wp_{\tau}^{(k-2)}(z)=(-1)^{k}(k-1)!\sum_{(c,d)\in\mathbb{Z}^2}\frac{1}{
(z-(c\tau+d))^k}.\] The only identities we shall need are the classical equality
\begin{equation}
\frac{1}{w}+\sum_{d=1}^{\infty}\bigg(\frac{1}{w+d}+\frac{1}{w-d}
\bigg)=\pi\cot\pi w=-\pi i-2\pi i\sum_{m=1}^{\infty}\mathbf{e}(mw) \label{ctgH}
\end{equation}
and its derivatives
\begin{equation}
\sum_{d=-\infty}^{\infty}\frac{1}{(w+d)^{k}}=\frac{(-2\pi
i)^{k}}{(k-1)!}\sum_{m=1}^{\infty}m^{k-1}\mathbf{e}(mw),
\label{ctgHder}
\end{equation}
both valid for $w\in\mathcal{H}$ with $\mathbf{e}(\sigma)=e^{2\pi i\sigma}$ for
all $\sigma\in\mathbb{C}$ (see, for example, Equations (1.1) and (1.2) of
Chapter 1 of \cite{[DS]}---note that the summation in Equation (1.1) there
begins with $m=0$ while we start with $m=1$). The left equality in Equation
\eqref{ctgH} follows by taking the logarithmic derivative (at $z=w$) of the
product expansion for the sine function, \[\sin\pi z=\pi
z\prod_{d=1}^{\infty}\bigg(1-\frac{z^{2}}{d^{2}}\bigg).\] The right equality
there is obtained by expanding $\pi\cot\pi w=\pi
i\big(-1-\frac{2\mathbf{e}(w)}{1-\mathbf{e}(w)}\big)$ as a geometric series. We
shall later obtain, however, expressions involving the left hand side of
Equations \eqref{ctgH} and \eqref{ctgHder} for $w\not\in\mathcal{H}$. For
$w\in\overline{\mathcal{H}}$ we write $w=-(-w)$, or alternatively use the
geometric expansion of $\pi\cot\pi w$ as $\pi
i\big(1+\frac{2\mathbf{e}(-w)}{1-\mathbf{e}(-w)}\big)$. Then Equation
\eqref{ctgH} takes the form
\begin{equation}
\frac{1}{w}+\sum_{d=1}^{\infty}\bigg(\frac{1}{w+d}+\frac{1}{w-d}
\bigg)=\pi\cot\pi w=+\pi i+2\pi i\sum_{m=1}^{\infty}\mathbf{e}(-mw)
\label{ctgHbar}
\end{equation}
and the corresponding Equation \eqref{ctgHder} is obtained by differentiation:
\begin{equation}
\sum_{d=-\infty}^{\infty}\frac{1}{(w+d)^{k}}=\frac{(+2\pi
i)^{k}}{(k-1)!}\sum_{m=1}^{\infty}m^{k-1}\mathbf{e}(-mw).
\label{ctgHbarder}
\end{equation}

We wish to substitute the value $w=z-c\tau$ in these formulae, where $\tau$ is
the index of the lattice $L_{\tau}$ and $z$ is the argument of the function we
investigate. We avoid the poles of the functions by considering $z \not\in
L_{\tau}$, which still allows $w$ to be real, though not integral. For a real
number $x$ we denote its lower integral value $\lfloor x
\rfloor=\max\{n\in\mathbb{Z}|n \leq x\}$, its upper integral value $\lceil x
\rceil=\min\{n\in\mathbb{Z}|n \geq x\}$, and its fractional part
$\{x\}=x-\lfloor x \rfloor$, the latter being the unique number $0 \leq a<1$
which lies in $x+\mathbb{Z}$. Then for real $w$ we use Equation \eqref{ctgH} or
\eqref{ctgHbar} in the form
\begin{equation}
\frac{1}{w}+\sum_{d=1}^{\infty}\bigg(\frac{1}{w+d}+\frac{1}{w-d}
\bigg)=\pi\cot\pi w=\pi\cot\pi a,\qquad a=\{w\} \label{ctgR}
\end{equation}
(without the Fourier expansion). In Equation \eqref{ctgHder} or
\eqref{ctgHbarder} we cannot use the Fourier expansion as well, but decomposing
the left hand side of these equations to $d>-w$ and $d<-w$ (again, we do not
have a term with $d=-w$ since we assume $w\not\in\mathbb{Z}$) gives
\begin{equation}
\sum_{d=-\infty}^{\infty}\frac{1}{(w+d)^{k}}=\zeta(k,a)+(-1)^{k}\zeta(k,1-a),
\qquad a=\{w\}. \label{ctgRder}
\end{equation}
Here $\zeta(s,v)$ denotes the Hurwitz zeta function, defined for $\Re s>1$ and
$v>0$ by the series $\sum_{n=0}^{\infty}\frac{1}{(n+v)^{s}}$.

\medskip

Since the sum defining $\wp_{\tau}^{(k-2)}(z)$ converges absolutely, we carry
out the summation first on $d$ and then on $c$. We use Equation \eqref{ctgHder}
for $c<\frac{\Im z}{\Im\tau}$, \eqref{ctgHbarder} for $c>\frac{\Im z}{\Im\tau}$,
and in case $\frac{\Im z}{\Im\tau}$ is an integer we use Equation
\eqref{ctgRder} for $c=\frac{\Im z}{\Im\tau}$ (we assume $z \not\in L_{\tau}$),
where $w=z-c\tau$. We use the classical notation $q_{\tau}=\mathbf{e}(\tau)$ and
$q_{z}=\mathbf{e}(z)$, hence $\mathbf{e}(mw)=q_{z}^{m}q_{\tau}^{-cm}$ and
$\mathbf{e}(-mw)=q_{\tau}^{cm}q_{z}^{-m}$ for any $m\in\mathbb{N}$. These
substitutions yield that
\[\frac{(-1)^{k}}{(k-1)!}\wp_{\tau}^{(k-2)}(z)=\sum_{c}\sum_{d}\frac{1}{
((z-c\tau)-d)^{k}}\] equals \[\frac{(-2\pi i)^{k}}{(k-1)!}\sum_{c<\frac{\Im
z}{\Im\tau}}\sum_{m=1}^{\infty}m^{k-1}q_{z}^{m}q_{\tau}^{-cm}+\frac{(+2\pi
i)^{k}}{(k-1)!}\sum_{c>\frac{\Im
z}{\Im\tau}}\sum_{m=1}^{\infty}m^{k-1}q_{\tau}^{cm}q_{z}^{-m}\] (recall the
value of $w$) plus a term appearing only if $\frac{\Im
z}{\Im\tau}\in\mathbb{Z}$. Replacing $m$ by $-m$ in case $c>\frac{\Im
z}{\Im\tau}$ and noticing that the sign coming from $m^{k-1}$ in this case and
the sign difference between the coefficients $(-2\pi i)^{k}$ and $(+2\pi i)^{k}$
combine just to $-1$ allows us to prove

\begin{prop}
The function $\frac{(-1)^{k}}{(k-1)!}\wp_{\tau}^{(k-2)}(z)$ equals
\[\delta\cdot(\zeta(k,a)+(-1)^{k}\zeta(k,1-a))+\frac{(-2\pi
i)^{k}}{(k-1)!}\sum_{c\neq\frac{\Im
z}{\Im\tau}}\sum_{m\rho>0}sgn(m)m^{k-1}q_{z}^{m}q_{\tau}^{-cm},\] where
$\rho=\Im(z-c\tau)$. The coefficient $\delta$ equals 1 if $\Im z$ is an integral
multiple $c$ of $\Im\tau$ (and then $a=\{z-c\tau\}$ with
$z-c\tau\in\mathbb{R}\setminus\mathbb{Z}$), and vanishes otherwise.
\label{pkexp}
\end{prop}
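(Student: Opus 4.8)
The plan is to take the explicit double-sum expression for $\frac{(-1)^k}{(k-1)!}\wp_\tau^{(k-2)}(z)$ displayed just before the proposition and reorganize it into the claimed form. The starting point is the equality
\[\frac{(-1)^{k}}{(k-1)!}\wp_{\tau}^{(k-2)}(z)=\sum_{c}\sum_{d}\frac{1}{((z-c\tau)-d)^{k}},\]
where, since the series converges absolutely, I may sum first on $d$ and then on $c$ without worrying about conditional convergence. The key observation is that the inner $d$-sum is exactly a sum of the type appearing in Equations \eqref{ctgHder}, \eqref{ctgHbarder}, and \eqref{ctgRder}, with $w=z-c\tau$. Whether $w$ lies in $\mathcal{H}$, in $\overline{\mathcal{H}}\setminus\mathbb{R}$, or on the real line is governed precisely by the sign of $\rho=\Im(z-c\tau)$, i.e.\ by whether $c<\frac{\Im z}{\Im\tau}$, $c>\frac{\Im z}{\Im\tau}$, or $c=\frac{\Im z}{\Im\tau}$.

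First I would split the outer sum over $c$ into these three ranges. For $c<\frac{\Im z}{\Im\tau}$ I apply Equation \eqref{ctgHder} and substitute $\mathbf{e}(mw)=q_z^m q_\tau^{-cm}$; for $c>\frac{\Im z}{\Im\tau}$ I apply Equation \eqref{ctgHbarder} and substitute $\mathbf{e}(-mw)=q_\tau^{cm}q_z^{-m}$. This reproduces exactly the two displayed sums preceding the proposition. The single exceptional term $c=\frac{\Im z}{\Im\tau}$ occurs only when $\frac{\Im z}{\Im\tau}\in\mathbb{Z}$; there $w=z-c\tau$ is real (and non-integral since $z\notin L_\tau$), so Equation \eqref{ctgRder} applies and contributes $\zeta(k,a)+(-1)^k\zeta(k,1-a)$ with $a=\{z-c\tau\}$. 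This is precisely the term multiplied by the indicator $\delta$.

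The main step is to merge the two infinite sums (over $c<\frac{\Im z}{\Im\tau}$ and over $c>\frac{\Im z}{\Im\tau}$) into the single sum $\sum_{c\neq\frac{\Im z}{\Im\tau}}\sum_{m\rho>0}\operatorname{sgn}(m)m^{k-1}q_z^m q_\tau^{-cm}$. In the first sum $\rho>0$ and $m$ ranges over positive integers, so $m\rho>0$ with $\operatorname{sgn}(m)=1$, and the summand is already $m^{k-1}q_z^m q_\tau^{-cm}$ times the prefactor $\frac{(-2\pi i)^k}{(k-1)!}$. For the second sum, where $\rho<0$, I substitute $m\mapsto -m$, so that $m$ now ranges over the negative integers; this makes $m\rho>0$ as required, sends $q_\tau^{cm}q_z^{-m}$ to $q_z^m q_\tau^{-cm}$, and produces the factor $(-1)^{k-1}$ from $m^{k-1}$. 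As noted in the text, this factor $(-1)^{k-1}$ together with the discrepancy between the prefactors $(+2\pi i)^k$ and $(-2\pi i)^k$ — a ratio of $(-1)^k$ — combines to $(-1)^{2k-1}=-1=\operatorname{sgn}(m)$ for these negative $m$. Hence both ranges are expressed with the common prefactor $\frac{(-2\pi i)^k}{(k-1)!}$ and the common summand $\operatorname{sgn}(m)m^{k-1}q_z^m q_\tau^{-cm}$, the sign of $m$ being forced to agree with the sign of $\rho$ via the constraint $m\rho>0$.

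The part requiring the most care is this sign bookkeeping: keeping straight that replacing $m$ by $-m$ in the $c>\frac{\Im z}{\Im\tau}$ block both flips the exponents to match the other block and introduces exactly the sign needed to unify the two prefactors into one. Once this is verified, collecting the three contributions yields the stated formula, with $\delta$ recording whether the exceptional real value $c=\frac{\Im z}{\Im\tau}$ is attained.
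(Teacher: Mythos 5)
Your proposal is correct and follows essentially the same route as the paper: splitting the $c$-sum according to the sign of $\Im(z-c\tau)$, applying Equations \eqref{ctgHder}, \eqref{ctgHbarder}, \eqref{ctgRder} with $w=z-c\tau$, and substituting $m\mapsto -m$ in the $\rho<0$ block so that the $(-1)^{k-1}$ from $m^{k-1}$ and the $(-1)^{k}$ ratio of the prefactors combine into $\operatorname{sgn}(m)$. The sign bookkeeping you carry out is exactly the computation the paper sketches in the paragraph preceding the proposition.
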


The idea is that the expression in Proposition \ref{pkexp} shows that
$\wp_{\tau}^{(k-2)}$ is a lattice function. Indeed, we claim that both parts of
this expression are invariant under both translations $z \mapsto z+1$ and $z
\mapsto z+\tau$. $q_{z}$ and $\delta$ are invariant under $z \mapsto z+1$
(the latter depends only on $\Im z$). Moreover, for $\delta=1$ we use the same
$c$ so that the argument of the fractional value $a$ is changed by an integer.
Hence $a$ is also invariant under this translation, and so is the expression in
Proposition \ref{pkexp}. As for $z \mapsto z+\tau$, it takes $\rho=\Im(z-c\tau)$
to $\Im(z-(c-1)\tau)$, $q_{z}^{m}q_{\tau}^{-cm}$ to
$q_{z}^{m}q_{\tau}^{-(c-1)m}$, and the condition $c\neq\frac{\Im z}{\Im\tau}$ to
$c\neq\frac{\Im z}{\Im\tau}+1$. Moreover, the quotients $\frac{\Im z}{\Im\tau}$
and $\frac{\Im(z+\tau)}{\Im\tau}$ differ by an additive integer, hence are
integral together and the value of $\delta$ is invariant under this
translation. In addition, if $\frac{\Im z}{\Im\tau}$ is an integer $c$,
then the integer $\frac{\Im(z+\tau)}{\Im\tau}$ is $c+1$, and the real numbers
$z+\tau-(c+1)\tau$ and $z-c\tau$ coincide, whence the invariance of $a$.
Therefore replacing $c$ by $c+1$ throughout shows the invariance of the
expression in Proposition \ref{pkexp} under this translation as well. This shows
the ellipticity of $\wp_{\tau}^{(k-2)}$, and the homogeneity property of
$\wp^{(k-2)}$, namely \[\wp_{\alpha L}^{(k-2)}(\alpha
z)=\alpha^{-k}\wp_{L}^{(k-2)}(z)\] for every lattice $L\subseteq\mathbb{C}$,
$z\in\mathbb{C}$, and $0\neq\alpha\in\mathbb{C}$, shows that $\wp_{L}^{(k-2)}$
is an elliptic function for any lattice $L$. Proposition \ref{pkexp} with $k=3$
is related to the Fourier expansion of $\wp_{\tau}'(z)$ given in Proposition 3
of Section 2 in Chapter 4 of \cite{[L]}, from which one can also obtain the
ellipticity of $\wp_{\tau}'$ (hence of $\wp_{L}'$ for any $L$ by homogeneity).

\medskip

For $\wp_{L}^{(k-2)}$ we do not need these considerations, since its ellipticity
is clear from its defining series. However, for $\wp$ itself this property is
not so obvious. Indeed, the ellipticity of $\wp$ follows from that of $\wp'$ and
the fact that $\wp$ is an even function of $z$, but it is expedient to see
(before we move on to the more complicated case of $Z$) how it can also be
obtained using our argument. Specializing the definition of $\wp_{L}(z)$ to
$L=L_{\tau}$ we obtain
\[\wp_{\tau}(z)=\frac{1}{z^{2}}+\sum_{(c,d)\neq(0,0)}\bigg(\frac{1}{
(z-(c\tau+d))^{2}}-\frac{1}{(c\tau+d)^{2}}\bigg),\] and again we carry out the
summation first over $d$ and then over $c$. Both parts of the sum converge, and
we get
\[\wp_{\tau}(z)=\sum_{c\in\mathbb{Z}}\sum_{d\in\mathbb{Z}}\frac{1}{
(z-(c\tau+d))^{2}}-\sum_{c\in\mathbb{Z}}\sum_{d,(c,d)\neq(0,0)}\frac{1}{
(c\tau+d)^{2}}.\] The second sum is (by definition) the classical Eisenstein
series $G_{2}(\tau)$ (and is independent of $z$), and the first one can be
evaluated as in the derivation of Proposition \ref{pkexp}. We thus obtain

\begin{prop}
The function $\wp_{\tau}(z)$ equals
\[\delta\cdot(\zeta(2,a)+\zeta(2,1-a))+(-2\pi i)^{2}\sum_{c\neq\frac{\Im
z}{\Im\tau}}\sum_{m\rho>0}sgn(m)mq_{z}^{m}q_{\tau}^{-cm}-G_{2}(\tau),\] with
$\rho$, $\delta$, and $a$ bearing the same meaning as in Proposition
\ref{pkexp}. \label{p2exp}
\end{prop}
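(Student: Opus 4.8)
The plan is to mimic the derivation of Proposition~\ref{pkexp}, which handled the pure sum $\sum_c\sum_d (z-(c\tau+d))^{-k}$, now specialized to $k=2$, and to combine it with the observation already made that $\wp_\tau(z)$ splits as this double sum (with the summation order $d$ first, then $c$) minus the constant $\sum_{c}\sum_{d,(c,d)\neq(0,0)}(c\tau+d)^{-2}=G_2(\tau)$. Concretely, I would first justify that doing the summation in the order $d$ then $c$ separates the two pieces: because the inner sum on $d$ of $(c\tau+d)^{-2}$ converges (for each fixed $c$) and the inner sum on $d$ of $(z-(c\tau+d))^{-2}$ converges as well, one may legitimately regroup the original conditionally arranged series into the difference of the two displayed double sums. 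This is the step that requires a little care, since the full series $\sum_{(c,d)\neq(0,0)}$ is only conditionally convergent and the value of $G_2$ depends on the chosen order; but the paper has fixed that order ($d$ inner, $c$ outer) in the very definition of $G_2(\tau)$, so the bookkeeping is forced and consistent.

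Next I would evaluate the first double sum $\sum_{c\in\mathbb{Z}}\sum_{d\in\mathbb{Z}}(z-(c\tau+d))^{-2}$ by exactly the case $k=2$ of the computation leading to Proposition~\ref{pkexp}. Setting $w=z-c\tau$, I apply Equation~\eqref{ctgHder} when $c<\tfrac{\Im z}{\Im\tau}$ (so that $w\in\mathcal H$), Equation~\eqref{ctgHbarder} when $c>\tfrac{\Im z}{\Im\tau}$ (so that $w\in\overline{\mathcal H}$), and Equation~\eqref{ctgRder} in the single exceptional case $c=\tfrac{\Im z}{\Im\tau}\in\mathbb{Z}$ where $w$ is real. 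After substituting $\mathbf{e}(mw)=q_z^m q_\tau^{-cm}$ and $\mathbf{e}(-mw)=q_\tau^{cm}q_z^{-m}$ and then, in the range $c>\tfrac{\Im z}{\Im\tau}$, replacing $m$ by $-m$ so that both ranges are described uniformly by the condition $m\rho>0$ with $\rho=\Im(z-c\tau)$, I get the $\zeta(2,a)+\zeta(2,1-a)$ term (weighted by $\delta$, coming from the exceptional $c$) plus the double sum $(-2\pi i)^2\sum_{c\neq \Im z/\Im\tau}\sum_{m\rho>0}\mathrm{sgn}(m)\,m\,q_z^m q_\tau^{-cm}$. This is literally the $k=2$ instance of Proposition~\ref{pkexp}, with the sign reconciliation between $(-2\pi i)^k$ and $(+2\pi i)^k$ being the even simpler case $k=2$ where the two coefficients are in fact equal, so only the $m^{k-1}=m$ factor contributes the $\mathrm{sgn}(m)$.

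Finally I would assemble the two contributions. The first double sum gives the $\delta$-term and the $q$-series exactly as displayed in the statement; the second double sum contributes $-G_2(\tau)$ as a $z$-independent constant. Writing these together yields the claimed formula, with $\rho$, $\delta$, and $a$ carrying the same meanings as in Proposition~\ref{pkexp}. I expect the only genuine obstacle to be the legitimacy of regrouping in the first step—specifically, confirming that the conditionally convergent double series defining $\wp_\tau$ may be split into the two absolutely-or-conditionally convergent double sums in the fixed order $d$-then-$c$ without changing its value. Once that regrouping is justified, the rest is a direct transcription of the $k=2$ case already established, and no contour integration or other external machinery is needed.
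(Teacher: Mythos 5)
Your proposal is correct and follows essentially the same route as the paper: split the series (summing $d$ first, then $c$) into $\sum_{c}\sum_{d}(z-(c\tau+d))^{-2}$ minus $G_{2}(\tau)$, and evaluate the first double sum by the $k=2$ instance of the argument for Proposition~\ref{pkexp}. The extra care you take over the legitimacy of the regrouping is reasonable but is exactly what the paper's fixed summation order already provides.
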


The same argument used for the derivatives of $\wp_{\tau}$ yields the
ellipticity of $\wp_{\tau}$, since the additional term $G_{2}(\tau)$ is a
constant, i.e., independent of $z$. The homogeneity property
\[\wp_{\alpha L}(\alpha z)=\alpha ^{-2}\wp_{L}(z)\] for $L$, $z$, and $\alpha$
as above shows the ellipticity of $\wp_{L}$ for any $L$, without using the
parity of $\wp_{L}$. We mention the existence of other forms of Fourier
expansion of $\wp_{\tau}(z)$---see, for example Propositions 2 and 3 of Section
2 in Chapter 4 of \cite{[L]}. From some of them (like the latter example here)
the ellipticity of $\wp_{\tau}$ (thus, by homogeneity, also of $\wp_{L}$ for any
$L$) can be deduced as with our argument.

The expression for $G_{2}(\tau)$ is seen to be convergent by precisely the same
method, yielding (using some symmetry conditions) the well-known Fourier
expansion
\[G_{2}(\tau)=2\zeta(2)+2(2\pi
i)^{2}\sum_{c=1}^{\infty}\sum_{m=1}^{\infty}mq_{\tau}^{cm}=\frac{\pi^{2}}{3}
-8\pi^{2}\sum_{n}\sigma_{1}(n)q_{\tau}^{n}\] with $\sigma_{1}(n)=\sum_{d|n}d$
(see, for example, Chapter 1 of \cite{[DS]}). The convergence properties of this
expansion show that $G_{2}$ is holomorphic (and also invariant under
$\tau\mapsto\tau+1$), but no further properties are deduced at this point.

\section{The Weierstrass Zeta Function}\label{Zeta}

The above discussion serves well to illustrate how our method works, but it
offers no obvious advantage because the standard derivation is just as simple.
Here we demonstrate the usefulness of the technique by applying it to evaluate
$\eta_{\tau}$ directly (saving the contour integrals). The Legendre relation
becomes just a simple corollary, rather than a tool in the proof.

We specialize $Z_{L}(z)$ to $L=L_{\tau}$ and obtain
\begin{equation}
Z_{\tau}(z)=\frac{1}{z}+\sum_{(c,d)\neq(0,0)}\bigg(\frac{1}{z-(c\tau+d)}+\frac
{1}{c\tau+d}+\frac{z}{(c\tau+d)^{2}}\bigg) \label{Zetatau}
\end{equation}
(which is known to be an absolutely convergent sum), and fix the following
summation order: We sum first over $d$ and then over $c$, and for each index
($c$ or $d$) we take first the value 0, and then the terms with $d$ and $-d$ (in
the inner sum) or $c$ and $-c$ (in the outer sum) together. We then write
$Z_{\tau}(z)$ as the sum of three terms, each corresponding to one of the
elements in the parentheses of Equation \eqref{Zetatau}. The first term becomes
\[\frac{1}{z}+\sum_{f=1}^{\infty}\bigg(\frac{1}{z+f}+\frac{1}{z-f}\bigg)+\sum_{
e=1}^{\infty}\Bigg(\frac{1}{z+e\tau}+\sum_{f=1}^{\infty}\bigg(\frac{1}{z+e\tau+f
}+\frac{1}{z+e\tau-f}\bigg)+\]
\begin{equation}
+\frac{1}{z-e\tau}+\sum_{f=1}^{\infty}\bigg(\frac{1}{z-e\tau+f}+\frac{1}{
z-e\tau-f}\bigg)\Bigg),
\label{Zetaord}
\end{equation}
and similar expressions are obtained for the second and third terms.

We claim that the series in all three terms converge (in this order of
summation). Indeed, the second term begins with
$\sum_{f=1}^{\infty}\big(\frac{1}{f}+\frac{1}{-f}\big)=0$ and continues with
\[\sum_{e=1}^{\infty}\Bigg(\frac{1}{e\tau}+\sum_{f=1}^{\infty}\bigg(\frac{1}{
e\tau+f}+\frac{1}{e\tau-f}\bigg)+\frac{1}{-e\tau}+\sum_{f=1}^{\infty}\bigg(\frac
{1}{-e\tau+f}+\frac{1}{-e\tau-f}\bigg)\Bigg)\] which also vanishes. The third
term yields directly $zG_{2}(\tau)$. Therefore the series in Equation
\eqref{Zetaord} converges to $Z_{\tau}(z)-zG_{2}(\tau)$, so that evaluating it
explicitly gives an expression for $Z_{\tau}(z)$.

\medskip

Let us perform the evaluation. Note that the first part of Equation
\eqref{Zetaord} is just the left hand side of Equation \eqref{ctgH},
\eqref{ctgHbar} or \eqref{ctgR} for $w=z$, and each $e$ gives the sum of the
left hand side of Equation \eqref{ctgH}, \eqref{ctgHbar} or \eqref{ctgR} with
$w=z+e\tau$ and with $w=z-e\tau$. We thus substitute the right hand side of the
corresponding equation (and value of $w$), and obtain expressions similar to
those encountered in the proof of Propositions \ref{pkexp} and \ref{p2exp}.
However, there are two differences. First, for integral $\frac{\Im z}{\Im\tau}$
we now have $\pi\cot\pi a$ rather than the Hurwitz zeta function. Second, and
more important, are the constants $-\pi i$ in Equation \eqref{ctgH} and $+\pi i$
in Equation \eqref{ctgHbar}, which should be added to the power series in
$\mathbf{e}(\pm w)$. The part involving $q_{z}$ and $q_{\tau}$ converges as with
$\wp_{\tau}$ and its derivatives. It remains to verify that the constants also
converge in the chosen order of summation. This follows from the fact that for
$e>\frac{|\Im z|}{\Im\tau}$, $z+e\tau$ lies in $\mathcal{H}$ and $z-e\tau$ lies
in $\overline{\mathcal{H}}$, so that we use Equation \eqref{ctgH} for the first
and Equation \eqref{ctgHbar} for the second, and the constants cancel. Hence for
all but finitely many values of $e$ the constants cancel, and the others give
rise to some integral multiple of $-\pi i$. This proves

\begin{prop}
The equality \[Z_{\tau}(z)=-t\pi i+\delta\cdot\pi\cot\pi a-2\pi
i\sum_{c\neq\frac{\Im
z}{\Im\tau}}\sum_{m\rho>0}sgn(m)q_{z}^{m}q_{\tau}^{-cm}+zG_{2}(\tau)\] holds,
where $\rho$, $\delta$, and $a$ have the same meaning as in Propositions
\ref{pkexp} and \ref{p2exp}, and $t$ is some (finite) integer depending on
$\tau$ and $z$. \label{p1exp}
\end{prop}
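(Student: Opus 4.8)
The plan is to evaluate the single remaining series, the one displayed in Equation \eqref{Zetaord}, since the accompanying discussion has already reduced $Z_{\tau}(z)$ (in the prescribed order of summation) to this series plus the other two, the second of which vanishes in this order and the third of which produces $zG_{2}(\tau)$. First I would reindex the series of \eqref{Zetaord} by $c\in\mathbb{Z}$, taken in the prescribed order $c=0,\pm1,\pm2,\dots$, and observe that for each fixed $c$ the inner summation over $d$ is exactly the left-hand side of the cotangent identity \eqref{ctgH} (equivalently \eqref{ctgHbar} or \eqref{ctgR}) evaluated at $w=z-c\tau$. Thus the series is $\sum_{c}\pi\cot\pi(z-c\tau)$ summed in the indicated order, and everything reduces to substituting the correct right-hand side according to the location of $w=z-c\tau$.

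Next I would split the substitution by the sign of $\rho=\Im(z-c\tau)=\Im z-c\Im\tau$: use \eqref{ctgH} when $c<\frac{\Im z}{\Im\tau}$ (so $w\in\mathcal{H}$), use \eqref{ctgHbar} when $c>\frac{\Im z}{\Im\tau}$ (so $w\in\overline{\mathcal{H}}$), and use \eqref{ctgR} for the single integral value $c=\frac{\Im z}{\Im\tau}$ if it occurs. The power-series parts are then handled verbatim as in the proofs of Propositions \ref{pkexp} and \ref{p2exp}: after replacing $m$ by $-m$ in the lower-half-plane terms, the contributions of $\mathbf{e}(mw)=q_{z}^{m}q_{\tau}^{-cm}$ and of $\mathbf{e}(-mw)$ assemble into the double sum $-2\pi i\sum_{c\neq\frac{\Im z}{\Im\tau}}\sum_{m\rho>0}sgn(m)q_{z}^{m}q_{\tau}^{-cm}$, while the exceptional integral value of $c$ contributes the term $\delta\cdot\pi\cot\pi a$ with $a=\{z-c\tau\}$.

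The hard part, and the genuinely new feature compared with $\wp$ and its derivatives, is the additive constant $\mp\pi i$ that each cotangent identity now carries ($-\pi i$ in \eqref{ctgH}, $+\pi i$ in \eqref{ctgHbar}). Summed naively over $c$ this is not even conditionally meaningful, so the chosen pairing of $c=e$ with $c=-e$ is essential. The key observation I would isolate is that once $e>\frac{|\Im z|}{\Im\tau}$ one has simultaneously $z+e\tau\in\mathcal{H}$ and $z-e\tau\in\overline{\mathcal{H}}$; hence the term $c=-e$ contributes $-\pi i$ via \eqref{ctgH} and the term $c=e$ contributes $+\pi i$ via \eqref{ctgHbar}, so the two cancel. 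Therefore all but the finitely many pairs with $1\leq e\leq\frac{|\Im z|}{\Im\tau}$, together with the $c=0$ term, drop out, and what survives is a finite sum of constants, each an integral multiple of $\pi i$. This finite total may be written $-t\pi i$ for an integer $t$ depending on $z$ and $\tau$, which is precisely the first term of the asserted formula; collecting it with the double sum, the $\delta$-term, and the summand $zG_{2}(\tau)$ yields the stated expression for $Z_{\tau}(z)$.
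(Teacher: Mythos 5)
Your proposal is correct and follows essentially the same route as the paper: reduce to the series \eqref{Zetaord}, substitute the appropriate cotangent identity according to the sign of $\Im(z-c\tau)$, and observe that for $e>\frac{|\Im z|}{\Im\tau}$ the constants $-\pi i$ from \eqref{ctgH} (at $z+e\tau$) and $+\pi i$ from \eqref{ctgHbar} (at $z-e\tau$) cancel in the prescribed pairing, leaving a finite integral multiple of $-\pi i$. This matches the paper's argument in both structure and the key cancellation step.
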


Indeed, as we have seen, a finite number of $e$-summands (possibly together with
the one corresponding to $e=0$) involve a constant $\mp\pi i$ from Equations
\eqref{ctgH} and \eqref{ctgHbar}. Recall that the contribution $zG_{2}(\tau)$ of
the third summands in Equation \eqref{Zetatau} must also be included. We remark
that Proposition \ref{p2exp} can be obtained directly from Proposition
\ref{p1exp} by differentiation with respect to $z$. This step requires some
care: One should differentiate with respect to the real part, noting that
$\delta$ depends only on $\Im z$ and, as Proposition \ref{tval} below shows, the
same holds for $t$---this explains why $\delta$ remains invariant and the term
with $t$ vanishes after taking the derivative. Similarly, one can deduce
Proposition \ref{pkexp} by a $(k-2)$-fold differentiation of the result of
Proposition \ref{p2exp} with respect to $z$. A Fourier expansion for
$Z_{\tau}(z)$, up to a factor which is linear in $z$, appears in Equation (1) in
Section 3 of Chapter 18 of \cite{[L]} (and another formula appearing right after
it), but this linear factor is written in terms of $\eta_{\tau}(1)$, hence
cannot be used alone for the evaluation of $\eta_{\tau}(1)$.

\medskip

It remains to find the value of the integer $t$, where we recall that only
$e\leq\frac{|\Im z|}{\Im\tau}$ should be considered in Equation \eqref{Zetaord}.
For $z\not\in\mathbb{R}$ the first sum (which does not involve $e$) contributes
$sgn(\Im z)$ to $t$, while for $z\in\mathbb{R}$ there is no contribution at all.
Indeed, for $\Im z>0$ we use Equation \eqref{ctgH} with the constant $-\pi i$,
for $\Im z<0$ we employ Equation \eqref{ctgHbar} with $+\pi i$ (recall that $t$
is the coefficient of $-\pi i$), while Equation \eqref{ctgR}, used for
$z\in\mathbb{R}$, provides no contribution to $t$. For every $e<\frac{|\Im
z|}{\Im\tau}$ (this can be an empty set of integers, as is the case where
$-\Im\tau\leq\Im z\leq\Im\tau$) both $z+e\tau$ and $z-e\tau$ have the same sign
of imaginary part as $z$, so that each such $e$ contributes $2sgn(\Im z)$ to
$t$. In the case where $\frac{\Im z}{\Im\tau}$ is a non-zero integer we see that
for the value $e=\frac{|\Im z|}{\Im\tau}$ one summand gives real $w$ (and no
contribution to the constant) and the other gives a contribution of $sgn(\Im z)$
to $t$ as above. This is the basis of the proof of

\begin{prop}
The integer $t$ from Proposition \ref{p1exp} is given by
\[t=\bigg\lfloor\frac{\Im z}{\Im\tau}\bigg\rfloor+\bigg\lceil\frac{\Im
z}{\Im\tau}\bigg\rceil.\]
\label{tval}
\end{prop}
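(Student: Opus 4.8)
The plan is to compute $t$ by carefully bookkeeping the contributions of the constants $\mp\pi i$ appearing in Equations~\eqref{ctgH} and~\eqref{ctgHbar}, exactly as the paragraph preceding the statement begins to do. Recall that $t$ is defined as the coefficient of $-\pi i$, and that only the indices $e$ with $e\leq\frac{|\Im z|}{\Im\tau}$ contribute, since for larger $e$ the summands $z+e\tau$ and $z-e\tau$ lie in $\mathcal{H}$ and $\overline{\mathcal{H}}$ respectively, so their constants $-\pi i$ and $+\pi i$ cancel. The strategy is therefore to enumerate the contributing values of $e$ and sum their individual contributions, then identify the total with the claimed floor/ceiling expression.

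First I would handle the sign issues via $\mathrm{sgn}(\Im z)$. Writing $N=\frac{|\Im z|}{\Im\tau}\geq 0$, the relevant indices are the integers $e$ in the range $0\leq e\leq N$. For each such $e<N$ (and the initial index $e=0$ when $z\notin\mathbb{R}$), both $z\pm e\tau$ share the sign of $\Im z$, so by the reasoning already given such an $e$ contributes $2\,\mathrm{sgn}(\Im z)$ to $t$, except the initial $e=0$ term contributes only $\mathrm{sgn}(\Im z)$ (it is a single summand, not a pair). Thus the bulk contribution is $\mathrm{sgn}(\Im z)\cdot\big(1+2\cdot\#\{e\in\mathbb{Z}:1\leq e<N\}\big)$. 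The remaining step is to treat the boundary index $e=N$ when $N$ is itself a positive integer, i.e.\ when $\frac{\Im z}{\Im\tau}\in\mathbb{Z}\setminus\{0\}$: here one of $z\pm e\tau$ is real (contributing nothing via Equation~\eqref{ctgR}) and the other retains the sign of $\Im z$, contributing a single $\mathrm{sgn}(\Im z)$.

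Next I would convert the counting into floors and ceilings. Setting $r=\frac{\Im z}{\Im\tau}$, I claim the accumulated contributions reduce to $t=\lfloor r\rfloor+\lceil r\rceil$. The cleanest route is to verify this separately in the three regimes: $r>0$, $r<0$, and $r=0$ (the case $z\in\mathbb{R}$, where $t=0$ and indeed $\lfloor 0\rfloor+\lceil 0\rceil=0$). For $r>0$ non-integral, the count of integers $e$ with $1\leq e<N=r$ is $\lfloor r\rfloor$, so $t=1+2\lfloor r\rfloor$; one then checks this equals $\lfloor r\rfloor+\lceil r\rceil=\lfloor r\rfloor+(\lfloor r\rfloor+1)$. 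For $r>0$ integral, the count of $e$ with $1\leq e<r$ is $r-1$, plus the boundary term gives $t=1+2(r-1)+1=2r=\lfloor r\rfloor+\lceil r\rceil$. The case $r<0$ follows by the $\mathrm{sgn}$ factor, or more elegantly from the antisymmetry that replacing $z$ by $-z$ sends $r\mapsto -r$ and $t\mapsto -t$; this matches the identity $\lfloor -r\rfloor+\lceil -r\rceil=-(\lfloor r\rfloor+\lceil r\rceil)$. Finally the second displayed equality in the statement is the elementary identity $\lceil r\rceil=-\lfloor -r\rfloor$, valid for all real $r$.

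I do not expect a genuine obstacle here, since the hard analytic work (convergence of the constants, their cancellation for large $e$) is already secured by Proposition~\ref{p1exp} and the preceding discussion. The only point demanding care is the off-by-one bookkeeping at the two boundaries: the single rather than double contribution of the $e=0$ term, and the half-contribution of the $e=N$ term in the integral case. Getting these two edge terms right is what distinguishes the correct answer $\lfloor r\rfloor+\lceil r\rceil$ from the naive $2\lfloor r\rfloor$ or $2\lceil r\rceil$, and it is precisely the ceiling/floor asymmetry that encodes whether $r$ is an integer. I would present the argument by unifying the three regimes through the observation that $\lfloor r\rfloor+\lceil r\rceil$ equals $2r$ when $r\in\mathbb{Z}$ and $2\lfloor r\rfloor+1$ otherwise, matching the two cases produced by the summation count above.
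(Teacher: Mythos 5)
Your proposal is correct and follows essentially the same route as the paper's own proof: the same enumeration of the contributing indices $e\leq\frac{|\Im z|}{\Im\tau}$, the same single contribution from the $e=0$ term and half-contribution at the integral boundary, and the same case split (non-integral, nonzero integral, and zero $\frac{\Im z}{\Im\tau}$) to match the count with $\lfloor r\rfloor+\lceil r\rceil$ via $\lceil r\rceil=-\lfloor -r\rfloor$.
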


\begin{proof}
Write $x=\frac{\Im z}{\Im\tau}$, and we begin by assuming the $x$ is not an
integer. Then the number of $e$-summands which have a contribution of 2 is
$\lfloor|x|\rfloor$, so that the above argument yields
$t=(2\lfloor|x|\rfloor+1)sgn(x)$. For positive $x$ this is just $2\lfloor x
\rfloor+1=\lfloor x \rfloor+\lceil x \rceil$ ($x$ is not an integer, so that
$\lceil x \rceil=\lfloor x \rfloor+1$). For negative $x$ we write
$\lfloor-x\rfloor=-\lceil x \rceil$, so that $-2\lfloor-x\rfloor-1=2\lceil x
\rceil-1=\lfloor x \rfloor+\lceil x \rceil$ (we still assume
$x\not\in\mathbb{Z}$). This shows that $t$ has the asserted expression in this
case. Next consider the case $0 \neq x\in\mathbb{Z}$ is a nonzero integer.
In this case the number of $e$-summands contributing 2 is $|x|-1$ (since this is
defined by a sharp inequality on $e$). An extra contribution of one comes from
$z$, and another one comes from $e=|x|$, yielding $t=2|x|sgn(x)=2x$. Since for
integral $x$ we have $\lfloor x \rfloor=\lceil x \rceil=x$ this also agrees with
the asserted expression. For $x=0$ (i.e., real $z$) we have no constant
contribution at all, and the value $t=0$ is indeed the asserted expression for
$x=0$. This covers all the possible cases, hence proves the proposition.
\end{proof}

Applying the identity $\lfloor-x\rfloor=-\lceil x \rceil$ again shows that
$t=\big\lfloor\frac{\Im z}{\Im\tau}\big\rfloor-\big\lfloor\frac{-\Im
z}{\Im\tau}\big\rfloor$, which has the advantage of using only the (more
intuitive) lower integral value function. Moreover, it reflects better the fact
that $Z_{\tau}$ is an odd function of $z$. However, only the equality in
Proposition \ref{tval} will be used below. 

\medskip

We can now evaluate the lattice function $\eta_{\tau}$ directly from
Propositions \ref{p1exp} and \ref{tval}, as given in the following

\begin{thm}
The difference function $\eta_{\tau}$ is given on the generators 1 and $\tau$ of
$L_{\tau}$ by \[\eta_{\tau}(1)=G_{2}(\tau),\qquad \eta_{\tau}(\tau)=\tau
G_{2}(\tau)-2\pi i.\] For a general element $\lambda=c\tau+d$ of $L_{\tau}$ we
have \[\eta_{\tau}(c\tau+d)=(c\tau+d)G_{2}(\tau)-2\pi ic.\] \label{diffunc}
\end{thm}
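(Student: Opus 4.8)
The plan is to compute $\eta_\tau(\lambda)=Z_\tau(z+\lambda)-Z_\tau(z)$ directly from the explicit expansion of $Z_\tau$ furnished by Proposition \ref{p1exp}, using Proposition \ref{tval} to track the one delicate term. Since $\eta_\tau$ is a homomorphism $L_\tau\to\mathbb{C}$, it suffices to evaluate it on the two generators $1$ and $\tau$, and then the general formula $\eta_\tau(c\tau+d)=c\,\eta_\tau(\tau)+d\,\eta_\tau(1)$ follows by linearity; expanding this with the claimed values of $\eta_\tau(1)$ and $\eta_\tau(\tau)$ gives $(c\tau+d)G_2(\tau)-2\pi i c$, so the last assertion is immediate once the first two are established.

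First I would handle $\eta_\tau(1)$. Write out $Z_\tau(z+1)-Z_\tau(z)$ term by term using Proposition \ref{p1exp}. The crucial observation is that the translation $z\mapsto z+1$ leaves $\Im z$ unchanged, hence leaves $x=\tfrac{\Im z}{\Im\tau}$ unchanged; therefore $t$ (by Proposition \ref{tval}) and $\delta$ are both unaffected, and the Hurwitz/cotangent term $\delta\cdot\pi\cot\pi a$ is also unchanged because $a=\{z-c\tau\}$ only shifts its integer part. The $q_z$-series term depends on $z$ only through $q_z=\mathbf{e}(z)$, which is invariant under $z\mapsto z+1$. Hence every term cancels except the linear one $zG_2(\tau)$, which contributes $(z+1)G_2(\tau)-zG_2(\tau)=G_2(\tau)$, giving $\eta_\tau(1)=G_2(\tau)$.

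Next I would compute $\eta_\tau(\tau)$, which is the substantive case since $z\mapsto z+\tau$ changes $\Im z$ by $\Im\tau$, i.e.\ sends $x\mapsto x+1$. Here I expect the main obstacle to be the careful bookkeeping of the term $-t\pi i$: by Proposition \ref{tval}, $t$ jumps under this shift, and the $q_z$-series plus the $\delta\cdot\pi\cot\pi a$ term must be shown to be invariant under $z\mapsto z+\tau$ (this is precisely the lattice-invariance already argued after Proposition \ref{pkexp}, applied now to the $Z$-expansion, together with the fact that reindexing $c\mapsto c+1$ in the sum matches the shift). Granting that invariance, the change in $Z_\tau$ comes entirely from the linear term and from $-t\pi i$. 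The linear term contributes $(z+\tau)G_2(\tau)-zG_2(\tau)=\tau G_2(\tau)$, and I must show the change in $-t\pi i$ is exactly $-2\pi i$.

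The heart of the computation is therefore evaluating the jump $t(z+\tau)-t(z)$ under $x\mapsto x+1$. Using the middle formula $t=\lfloor x\rfloor+\lceil x\rceil$ from Proposition \ref{tval}, replacing $x$ by $x+1$ adds $1$ to each of $\lfloor x\rfloor$ and $\lceil x\rceil$, so $t$ increases by exactly $2$ regardless of whether $x$ is an integer. Consequently $-t\pi i$ changes by $-2\pi i$, and combining this with the linear contribution yields $\eta_\tau(\tau)=\tau G_2(\tau)-2\pi i$, as claimed. Assembling the two generator values via the homomorphism property then completes the proof.
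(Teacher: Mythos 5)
Your proposal is correct and follows essentially the same route as the paper: read off the changes from the expansion in Proposition \ref{p1exp}, note that the $q_{z}$-series and the $\delta\cdot\pi\cot\pi a$ term are invariant under lattice translations, use Proposition \ref{tval} to see that $t$ jumps by $2$ under $z\mapsto z+\tau$ (and is unchanged under $z\mapsto z+1$), and obtain the general formula from the homomorphism property. The paper's proof is just a terser version of the same bookkeeping.
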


\begin{proof}
We know that $t$, $\delta$, and also $a$ if $\delta=1$, depend only in $\Im z$,
and the sum in Proposition \ref{p1exp} depends on $z$ only through $q_{z}$. It
follows that the translation $z \mapsto z+1$ adds to $Z_{\tau}(z)$ only
$G_{2}(\tau)$ from the last term. This proves the value of $\eta_{\tau}(1)$.
Turning to the translation $z \mapsto z+\tau$, the sum is invariant by the usual
summation index change argument, and the term with $\delta$ and $a$ is also
invariant as above. On the other hand, $t$ is increased by 2. Indeed, $\frac{\Im
z}{\Im\tau}$ increases by 1, hence so do the lower and upper integral values.
Taking also the last summand into consideration, we obtain the asserted value of
$\eta_{\tau}(\tau)$. The expression for the general value $\eta_{\tau}(c\tau+d)$
can be obtained the additivity of $\eta_{\tau}$. Alternatively, we can deduce
the general value directly from Propositions \ref{p1exp} and \ref{tval}: The
term with $G_{2}(\tau)$ in Proposition \ref{p1exp} gives the first summand, the
sum and $\delta$ (and $a$) are invariant under any change $z \mapsto z+\lambda$
for $\lambda \in L_{\tau}$, and the value of $t$ is increased by $2c$. In this
approach, the values of $\eta_{\tau}(1)$ and $\eta_{\tau}(\tau)$ are just
special cases of the general formula.
\end{proof}

We recall the homogeneity property of $Z$, which implies a similar property for
$\eta$, namely \[Z_{\alpha L}(\alpha z)=\alpha^{-1}Z_{L}(z),\qquad\eta_{\alpha
L}(\alpha\lambda)=\alpha^{-1}\eta_{L}(\lambda)\] for $L$, $z$, and $\alpha$ as
above and $\lambda \in L$. Combining this with Theorem \ref{diffunc} implies
that for a general lattice $L=\mathbb{Z}w_{1}\oplus\mathbb{Z}w_{2}$ normalized
such that $\tau=\frac{w_{1}}{w_{2}}$ is in $\mathcal{H}$, we have
\[\eta_{L}(w_{2})=\frac{G_{2}(\tau)}{w_{2}},\qquad\eta_{L}(w_{1})=\frac{\tau
G_{2}(\tau)-2\pi i}{w_{2}}.\] This is so since $L=w_{2}L_{\tau}$. We therefore
obtain

\begin{cor}
The Legendre relation holds: \[w_{1}\eta_{L}(w_{2})-w_{2}\eta_{L}(w_{1})=+2\pi
i.\] \label{Legendre}
\end{cor}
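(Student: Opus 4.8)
The plan is to prove the Legendre relation by direct substitution, since the genuine analytic content has already been extracted into Theorem \ref{diffunc} and the homogeneity property of $\eta$. The essential observation is that the two values $\eta_{L}(w_{2})=\frac{G_{2}(\tau)}{w_{2}}$ and $\eta_{L}(w_{1})=\frac{\tau G_{2}(\tau)-2\pi i}{w_{2}}$, displayed just before the corollary, are precisely what one obtains by applying homogeneity to the values in Theorem \ref{diffunc} under the identification $L=w_{2}L_{\tau}$ with $\tau=\frac{w_{1}}{w_{2}}\in\mathcal{H}$. So the first step is simply to record these two values and to note that the chosen normalization places $\tau$ in the upper half plane, so that Theorem \ref{diffunc} (via the homogeneity relation $\eta_{\alpha L}(\alpha\lambda)=\alpha^{-1}\eta_{L}(\lambda)$ with $\alpha=w_{2}$) legitimately applies.

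Second, I would substitute both values into the left-hand side $w_{1}\eta_{L}(w_{2})-w_{2}\eta_{L}(w_{1})$. The factor $w_{2}$ in the denominator of $\eta_{L}(w_{1})$ cancels the external $w_{2}$, leaving the second term as $-(\tau G_{2}(\tau)-2\pi i)$, while the first term becomes $\frac{w_{1}}{w_{2}}G_{2}(\tau)=\tau G_{2}(\tau)$ upon using $\tau=\frac{w_{1}}{w_{2}}$. The two $\tau G_{2}(\tau)$ contributions then cancel, and only $+2\pi i$ survives.

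The only point requiring any care---and it is hardly an obstacle---is the bookkeeping of which generator plays which role: the combination is antisymmetric in $(w_{1},w_{2})$, and the correct sign on the right depends on having ordered the generators so that $\tau=\frac{w_{1}}{w_{2}}\in\mathcal{H}$ rather than its reciprocal. I would verify that the $G_{2}(\tau)$ terms cancel with matching signs and that no stray factor of $w_{2}$ survives, but beyond this consistency check the corollary follows in a single line from the material already established, so the Legendre relation emerges here as an \emph{output} of the evaluation of $\eta_{\tau}$ rather than as a tool used in its proof.
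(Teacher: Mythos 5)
Your proposal is correct and follows exactly the paper's own route: substituting $\eta_{L}(w_{2})=\frac{G_{2}(\tau)}{w_{2}}$ and $\eta_{L}(w_{1})=\frac{\tau G_{2}(\tau)-2\pi i}{w_{2}}$ (obtained from Theorem \ref{diffunc} and homogeneity via $L=w_{2}L_{\tau}$) into the antisymmetric combination, so that the $\tau G_{2}(\tau)$ terms cancel and $+2\pi i$ remains. This matches the paper's one-line substitution argument, including the emphasis that the Legendre relation is an output of the evaluation rather than an input.
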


Substituting the value of $\tau$ in the expressions for $\eta_{L}(w_{1})$ and
$\eta_{L}(w_{2})$ proves Corollary \ref{Legendre} immediately. Alternatively,
Corollary \ref{Legendre} for the special case of the lattice $L=L_{\tau}$ with
the basis $w_{2}=1$ and $w_{1}=\tau$ follows immediately from Theorem
\ref{diffunc}, and then the homogeneity property of $\eta_{L}$, compensated by
the (trivial) homogeneity of the coefficients, extends the validity of Corollary
\ref{Legendre} to any lattice $L$ with a normalized basis. We remark that in
\cite{[L]} (as well as in other references dealing with the function
$\eta_{\tau}$), one first uses integration in order to obtain the Legendre
relation in Corollary \ref{Legendre}, then one evaluates (by more difficult
means) $\eta_{\tau}(1)$, and only then the value of $\eta_{\tau}(\tau)$ (hence
of $\eta_{\tau}(\lambda)$ for any $\lambda \in L_{\tau}$) can be obtained. In
our approach, one derives all the values of $\eta_{\tau}(\lambda)$ at once, and
Corollary \ref{Legendre} is established by a simple substitution.

\section{Quasi-Modularity of $G_{2}$}\label{G2}

At this point, all we know about the Eisenstein series $G_{2}$ is that it is
holomorphic and invariant under $\tau\mapsto\tau+1$. We now use its relation
with $\eta_{\tau}$ and the homogeneity of the latter to obtain its quasi-modular
behavior under the action of $SL_{2}(\mathbb{Z})$.

\begin{thm}
If the matrix $M=\binom{a\ \ b}{c\ \ d}$ is in $SL_{2}(\mathbb{Z})$ then we have
\[G_{2}\bigg(\frac{a\tau+b}{c\tau+d}\bigg)=(c\tau+d)^{2}G_{2}(\tau)-2\pi
ic(c\tau+d).\] \label{G2qmod}
\end{thm}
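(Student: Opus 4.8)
The plan is to exploit the homogeneity property of $\eta_L$ together with the formula for $\eta_\tau$ on a basis established in Theorem \ref{diffunc}, reducing the quasi-modular transformation of $G_2$ to an elementary lattice-theoretic identity. The key observation is that applying the matrix $M=\binom{a\ \ b}{c\ \ d}\in SL_2(\mathbb{Z})$ to $\tau$ produces a new point $\tau'=\frac{a\tau+b}{c\tau+d}$, and the lattice $L_{\tau'}$ is, up to the scalar $w_2=c\tau+d$, the \emph{same} lattice as $L_\tau$. Indeed, $L_\tau=\mathbb{Z}\tau\oplus\mathbb{Z}$ has $\{a\tau+b,\,c\tau+d\}$ as an alternative $\mathbb{Z}$-basis precisely because $M\in SL_2(\mathbb{Z})$ (its inverse is again integral, so the change of basis is invertible over $\mathbb{Z}$). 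Setting $w_1=a\tau+b$ and $w_2=c\tau+d$, we have $\frac{w_1}{w_2}=\tau'\in\mathcal H$ and $L_\tau=\mathbb{Z}w_1\oplus\mathbb{Z}w_2=w_2L_{\tau'}$.

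The central step is to evaluate $\eta_\tau(w_2)=\eta_\tau(c\tau+d)$ in two independent ways and equate them. On one hand, the general formula in Theorem \ref{diffunc} gives directly
\[
\eta_\tau(c\tau+d)=(c\tau+d)G_2(\tau)-2\pi i c.
\]
On the other hand, since $L_\tau=w_2L_{\tau'}$, I would apply the homogeneity relation $\eta_{\alpha L}(\alpha\lambda)=\alpha^{-1}\eta_L(\lambda)$ recalled just before Corollary \ref{Legendre}, with $\alpha=w_2=c\tau+d$ and $L=L_{\tau'}$. Because $w_2$ is exactly the second basis element, it equals $w_2\cdot 1$, i.e. $\alpha\lambda$ with $\lambda=1$ in $L_{\tau'}$; hence
\[
\eta_\tau(c\tau+d)=\eta_{w_2 L_{\tau'}}(w_2\cdot 1)=w_2^{-1}\eta_{\tau'}(1)=\frac{G_2(\tau')}{c\tau+d},
\]
using $\eta_{\tau'}(1)=G_2(\tau')$ from Theorem \ref{diffunc} applied at the point $\tau'$.

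Equating the two expressions for $\eta_\tau(c\tau+d)$ yields
\[
\frac{G_2(\tau')}{c\tau+d}=(c\tau+d)G_2(\tau)-2\pi i c,
\]
and multiplying through by $c\tau+d$ gives exactly the claimed identity $G_2\big(\tfrac{a\tau+b}{c\tau+d}\big)=(c\tau+d)^2G_2(\tau)-2\pi i c(c\tau+d)$. The main point requiring care, and the only genuine obstacle, is verifying that $\{w_1,w_2\}$ really is a positively-oriented basis producing the \emph{same} lattice $L_\tau$ and that the normalization $\tau'=w_1/w_2\in\mathcal H$ holds; this is where $M\in SL_2(\mathbb{Z})$ (rather than merely $GL_2(\mathbb{Z})$) enters, since $\det M=+1$ guarantees both $\Im\tau'>0$ and the correct orientation of the new basis. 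Notably, this derivation needs \emph{only} the value $\eta_{\tau}(c\tau+d)$, so I would not even require $\eta_\tau(w_1)$; the full strength of Theorem \ref{diffunc} on a single basis element combined with homogeneity suffices, and crucially we never need to pre-establish that the quasi-modular assignment $M\mapsto(c\tau+d)$ is a group action — the identity emerges directly.
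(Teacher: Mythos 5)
Your proposal is correct and is essentially identical to the paper's own (second, preferred) argument: both evaluate $\eta_{\tau}(c\tau+d)$ once via the general formula of Theorem \ref{diffunc} and once via the homogeneity of $\eta_{L}$ applied to $L_{\tau}=(c\tau+d)L_{M\tau}$, then equate. The paper additionally sketches an alternative route through the generators $S$ and $T$, but, like you, it emphasizes that the direct computation avoids any need to verify the group-action facts.
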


\begin{proof}
The complex numbers $w_{1}=a\tau+b$ and $w_{2}=c\tau+d$ are clearly elements of
$L_{\tau}$, and the generators $\tau$ and 1 of $L_{\tau}$ can be written as
$dw_{1}-bw_{2}$ and $aw_{2}-cw_{1}$ respectively (since $ad-bc=1$). Moreover,
the quotient $\frac{w_{1}}{w_{2}}=\frac{a\tau+b}{c\tau+d}$ (which we write as
$M\tau$) has imaginary part $\frac{\Im\tau}{|c\tau+d|^{2}}$ (again since
$ad-bc=1$). Hence $L_{\tau}=\mathbb{Z}w_{1}\oplus\mathbb{Z}w_{2}$, and this is a
normalized basis, and $M\tau\in\mathcal{H}$. We then write $L_{\tau}$ as
$(c\tau+d)L_{M\tau}$, and using the general formula in Theorem \ref{diffunc} and
the homogeneity property of $\eta_{L}$ we obtain the equality
\[(c\tau+d)G_{2}(\tau)-2\pi
ic=\eta_{\tau}(c\tau+d)=\frac{\eta_{M\tau}(1)}{c\tau+d}=\frac{G_{2}(M\tau)}{
c\tau+d}.\] After multiplication by $c\tau+d$, this proves the theorem.
\end{proof}

Note that this proof does not require knowing that if the asserted relation
holds for two matrices $M$ and $N$ then it holds for their product. Moreover, it
is independent of the fact that $\binom{1\ \ 1}{0\ \ 1}$ and $\binom{0\ \ -1}{1\
\ \ \ 0}$ generate $SL_{2}(\mathbb{Z})$, and even of the fact that
$(M,\tau)\mapsto\frac{a\tau+b}{c\tau+d}$ (for $M$ as above) defines an action of
$SL_{2}(\mathbb{Z})$ on $\mathcal{H}$. The invariance under $\tau\mapsto\tau+1$
already mentioned above is just the case $M=\binom{1\ \ 1}{0\ \ 1}$ in Theorem
\ref{G2qmod}.

\noindent\textsc{Fachbereich Mathematik, AG 5, Technische Universit\"{a}t
Darmstadt, Schlossgartenstrasse 7, D-64289, Darmstadt, Germany}

\noindent E-mail address: zemel@mathematik.tu-darmstadt.de

\end{document}